\documentclass[11pt]{article}
\usepackage[T1]{fontenc}
\usepackage{lmodern}
\usepackage[a4paper,margin=27mm]{geometry}
\usepackage{amsmath,amssymb,amsthm}
\usepackage{mathtools}
\usepackage{microtype}
\usepackage{booktabs,longtable,array}
\usepackage{xcolor}
\usepackage{xurl}
\usepackage[colorlinks=true,linkcolor=blue!45!black,citecolor=blue!45!black,urlcolor=blue!45!black]{hyperref}
\hypersetup{pdftitle={Certified Fourier rounding for the real Grothendieck inequality},pdfauthor={Bo Peng}}
\numberwithin{equation}{section}
\newtheorem{theorem}{Theorem}[section]
\newtheorem{proposition}[theorem]{Proposition}

\theoremstyle{definition}

\newcommand{\R}{\mathbb R}
\newcommand{\N}{\mathbb N}
\newcommand{\E}{\mathbb E}
\newcommand{\He}{\operatorname{He}}
\newcommand{\erf}{\operatorname{erf}}
\newcommand{\erfc}{\operatorname{erfc}}
\newcommand{\sgn}{\operatorname{sgn}}
\newcommand{\KG}{K_G^{\R}}

\DeclareMathOperator{\Odd}{Odd}
\title{Certified Fourier rounding for the\\real Grothendieck inequality}
\author{Bo Peng\thanks{\raggedright Institute of Mathematical Sciences, ShanghaiTech University.
Correspondence: \href{mailto:pengbo2025@shanghaitech.edu.cn}{pengbo2025@shanghaitech.edu.cn}.}}
\date{October 17, 2026}

\begin{document}
\maketitle
\begin{abstract}
We give a computer-assisted proof that the real Grothendieck constant is
strictly smaller than \(1.77987164\). The proof provides a complete
interval-arithmetic realization of Ren's five-variable construction.
A finite Fourier calculation encloses every scalar coefficient through
degree \(403\); a fourth-derivative estimate on the entire unit circle
bounds the remaining coefficient sum by \(10^{-4}\).
Exact rational arithmetic then certifies the final inequality.
We include the transfer to all finite real matrices and arbitrary vector
dimensions, the analytic quadrature errors, the rational parameters, and
source code for checking and regenerating the certificate.
\end{abstract}

\section{Introduction}

Grothendieck's inequality compares the value of a real bilinear form on
unit vectors with its value on signs~\cite{Grothendieck1953}.
For a finite real matrix \(M=(M_{ij})\in\R^{m\times n}\), put
\begin{align}
 S(M)&=\max_{\epsilon_i,\delta_j\in\{-1,1\}}
       \left|\sum_{i=1}^m\sum_{j=1}^nM_{ij}\epsilon_i\delta_j\right|,
       \label{eq:sign-value}\\
 V(M)&=\sup_{d\ge1}\ \max_{x_i,y_j\in\mathbb S^{d-1}}
       \left|\sum_{i=1}^m\sum_{j=1}^nM_{ij}\langle x_i,y_j\rangle\right|.
       \label{eq:vector-value}
\end{align}
The real Grothendieck constant \(\KG\) is the least number \(K\) such
that \(V(M)\le K S(M)\) for every finite real matrix \(M\).

Krivine proved
\[
 \KG\le\frac{\pi}{2\log(1+\sqrt2)}
       =1.782213978\ldots
\]
by preprocessing vector correlations before Gaussian sign
rounding~\cite{Krivine1979}. Braverman, Makarychev, Makarychev and
Naor proved a strict improvement over this bound~\cite{Braverman2013}.
Naor and Regev subsequently established the optimality of Krivine
schemes as a class~\cite{NaorRegev2014}.
Song and Yue gave the explicit bound
\begin{equation}\label{eq:song-yue}
 \KG\le\frac{625000\pi}{1102769}
       =1.780513787\ldots
\end{equation}
in their preprint~\cite{SongYue2026}. Their route combines a nonlinear
Gaussian rounding kernel, a coefficient majorant, and a certified
infinite tail.

Ren's public note and implementation~\cite{Ren2026} introduced the
five-variable sign function studied here and a degree-\(81\)
preprocessing polynomial. The note reports the stronger value
\(1.779754412112\) using a localized fourth- and sixth-order tail
estimate. Our contribution is a complete
interval-arithmetic realization of that construction. We evaluate the
full coefficient head, supply a whole-circle fourth-order tail estimate,
and combine them by an exact rational comparison. The precise result is
as follows.

\begin{theorem}\label{thm:main}
The real Grothendieck constant satisfies
\[
 \boxed{\displaystyle
 \KG<\frac{44496791}{25000000}=1.77987164<1.7799.}
\]
Consequently, for all positive integers \(m,n,d\), every
\(M\in\R^{m\times n}\), and all unit vectors
\(x_1,\ldots,x_m,y_1,\ldots,y_n\in\R^d\),
\[
 \left|\sum_{i,j}M_{ij}\langle x_i,y_j\rangle\right|
 \le\frac{44496791}{25000000}
       \max_{\epsilon_i,\delta_j\in\{-1,1\}}
       \left|\sum_{i,j}M_{ij}\epsilon_i\delta_j\right|.
\]
\end{theorem}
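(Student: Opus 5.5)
The proof follows the Krivine-type rounding framework. Given unit vectors \(x_i,y_j\in\R^d\), we apply a preprocessing map to them, round both sides with the five-variable sign function \(f\colon\R^5\to\{-1,1\}\) from Ren's construction, and compare expectations. The first step is to write the correlation of the rounded signs as a function of the inner product. For Gaussian projections \(G x\), \(G y\) with \(\langle x,y\rangle=t\), we have \(\E[f(Gx)f(Gy)]=\phi(t)\). Here
\[
 \phi(t)=\sum_{k\ \mathrm{odd}} c_k t^k,
 \qquad
 c_k=\frac{1}{k!}\sum_{|\alpha|=k}\frac{\widehat f(\alpha)^2}{\alpha!}\ge0,
\]
which is the Hermite expansion of \(f\). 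We may take \(f\) odd, so only odd \(k\) occur. Nonnegativity of the coefficients is what makes the transfer to arbitrary dimension work.

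Next we choose the preprocessing. We look for vectors \(u_i, v_j\) in a tensor-power Hilbert space with \(\langle u_i,v_j\rangle=\psi(\langle x_i,y_j\rangle)\). Here \(\psi\) is chosen so that \(\phi(\psi(t))\) has a linear term \(\beta t\) plus a remainder that is controlled in absolute value. Realizing \(\psi\) requires \(\sum_k|a_k|\le1\) for the preprocessing coefficients of \(\psi\), together with the usual sign-twisted embedding \(u=\bigoplus\sqrt{|a_k|}\,x^{\otimes k}\). Ren's degree-\(81\) polynomial plays this role. Then
\[
 \sum M_{ij}\langle x_i,y_j\rangle=\beta^{-1}\E\sum M_{ij}f(Gu_i)f(Gv_j)-\beta^{-1}\sum M_{ij}R(\langle x_i,y_j\rangle).
\]
The remainder term is absorbed using the same Krivine-type argument applied to its tensor expansion. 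Its norm is \(\sum_k|r_k|\), where the \(r_k\) are the coefficients of \(R\). This yields
\[
 V(M)\le \frac{1+\sum|r_k|}{\beta}\,S(M)
\]
in the form the construction requires, or a variant bounding \(V(M)\le K S(M)\) with \(K\) an explicit expression in the \(c_k\) and \(a_k\). Passing from infinite to finite dimension uses only that \(S\) is a maximum over signs. The expectation over Gaussians in a separable Hilbert space is bounded by \(S(M)\) pointwise, so the bound holds for every \(d\).

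The computational core is certifying \(K<44496791/25000000\). For \(k\le403\) we enclose each \(c_k\) rigorously by interval arithmetic. Each \(\widehat f(\alpha)\) is an integral of Hermite polynomials over the polyhedral cones where \(f\) is constant. These integrals reduce to one-dimensional integrals over angles on the unit circle, which we evaluate by quadrature with explicit analytic error bounds. For the tail \(\sum_{k>403}c_k\) we use the following estimate. Writing \(\phi(\cos\theta)\) as a Fourier series in \(\theta\), a bound \(|\partial_\theta^4\phi(\cos\theta)|\le C\) on the whole circle gives Fourier coefficients decaying like \(C/k^4\). Hence the tail is at most about \(C/(3\cdot403^3)\), which we want below \(10^{-4}\). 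Finally, we round all enclosures outward to rationals and compare them with the target by exact rational arithmetic.

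The main obstacle is the fourth-derivative bound on the entire circle. Since \(f\) is a sign function, \(\phi(\cos\theta)\) is only finitely smooth near \(\theta=0,\pi\), where the cones touch. I would first try differentiating the explicit angular representation of \(\phi\) piecewise and bounding each piece by interval evaluation on a fine subdivision. Near the endpoints I would fall back on a Taylor expansion, and I would switch to a localized treatment there if the global constant \(C\) turns out too large.
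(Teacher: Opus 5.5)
Your outline has the right overall shape (a kernel with dominant linear coefficient, a certified head through degree \(403\), a fourth-derivative tail, an exact rational comparison). However, the rounding scheme is not the one the certificate is about, and the transfer inequality you display is wrong.

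You preprocess the vectors once and feed the same Gaussian projection into all five coordinates of \(f\). Every channel then sees \(\psi(t)\), and the kernel is \(\phi(\psi(t))\). In the paper the channels are deliberately unequal. The primary coordinate is projected from the feature maps \(L_P,R_P\) and sees \(P(t)\). The four auxiliary coordinates are projected from the raw vectors with a sign flip, \(Y_{j,r}=-\langle g_r,y_j\rangle\), and see \(-t\). The kernel is \(H(z)=\mathcal H(P(z),-z)\) with \(\mathcal H(a,b)=\frac\pi2\sum_{j,\beta}A_{j,\beta}^2a^jb^{|\beta|}\). Ren's polynomial \(P\) acts only on the primary channel, and every certified number refers to this \(H\), not to \(\phi\circ P\). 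The finite smoothness you anticipate at \(\theta=0,\pi\) comes precisely from the unpreprocessed auxiliary correlation \(-z\) reaching modulus one.

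The \(c_n\) of \(H\) are not sign-constrained, since \(P\) has negative coefficients and the auxiliaries enter through \(-z\). The transfer rests instead on \(|\sum_{i,j}M_{ij}t_{ij}^k|\le V(M)\), obtained from tensor powers. That fact bounds the remainder \(\sum_{i,j}M_{ij}R(t_{ij})\) by \(\sum_k|r_k|\,V(M)\), not by a multiple of \(S(M)\). The correct conclusion is therefore \(V(M)\le S(M)/(\beta-\sum_k|r_k|)\), i.e.\ \(\KG\le\pi/(2\Gamma)\) with \(\Gamma=c_1-\sum_{n\ge3}|c_n|\). Your \((1+\sum_k|r_k|)/\beta\) does not follow.

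Both computational steps also fail as described. The set where \(f(w,x)=\sgn(w+U(\sum_iB(x_i)))\) is constant is bounded by a curved hypersurface, not by polyhedral cones, so there is no reduction to angular integrals. Enclosing separately the coefficients for the \(\binom{408}{5}\approx9\times10^{10}\) five-variable multi-indices of degree at most \(403\) is hopeless anyway. The missing idea is Fourier separation:
condition on \(x\) to get \(q_j(U(S(x)))\);
write \(q_j\) as a Fourier integral and expand the real phase \(e^{iu(U(s)-s)}\) to degree \(31\);
use \(e^{iuS(x)}=\prod_ie^{iuB(x_i)}\).
All auxiliary multi-indices then collapse into \(K_{uv}(-z)^4\), built from one-dimensional characteristic coefficients \(p_k(u)\) on \(768\) positive frequencies.

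For the tail, the Fourier coefficients of \(\phi(\cos\theta)\) in real \(\theta\) are Chebyshev coefficients, not the Taylor coefficients \(c_k\), and passing from one to the other loses exponentially. You must work on the complex circle, where \(H(e^{i\theta})=\sum_n c_ne^{in\theta}\) and \(|\partial_\theta^4H(e^{i\theta})|=|D^4H(e^{i\theta})|\) with \(D=z\,d/dz\). This needs bounds on \(\partial_a^p\partial_b^s\mathcal H\) in two regimes:
at complex arguments, via the paper's complex Gaussian density bounds;
at the real endpoints where \(|b|=1\), via primary smoothing and the mixed energies \(S_{p,s}(r)\), available because \(\|P\|_1=q_0<1\).

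Finally, summing a pointwise decay \(|c_k|\le C/k^4\) over odd \(k>N\) gives a tail of order \(C/(6N^3)\). With a constant of the size the paper obtains, \(C_4\approx4.9\times10^5\), that is about \(10^{-3}\), an order of magnitude too large. The paper instead uses Parseval, \(\sum_n n^8|c_n|^2=\|D^4H\|_2^2\le C_4^2\), and Cauchy--Schwarz over odd \(n>403\). This gains a factor of order \(\sqrt N\) and yields \(C_4/\sqrt{14\cdot403^7}<10^{-4}\).
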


The numerical proof has two outputs. If
\(H(z)=\sum_{n\ge1}c_nz^n\) is the scalar kernel defined below, then
\begin{align}
 c_1-\sum_{\substack{3\le n\le403\\n\text{ odd}}}|c_n|
     &>\frac{88263349}{10^8}, \label{eq:head-result}\\
 \sum_{\substack{n>403\\n\text{ odd}}}|c_n|
     &<\frac1{10000}. \label{eq:tail-result}
\end{align}
A general rounding argument converts their difference into an upper
bound on \(\KG\). Sections~\ref{sec:construction} and
\ref{sec:transfer} give this argument. Sections~\ref{sec:head} and
\ref{sec:tail} establish the two estimates, and
Section~\ref{sec:conclusion} completes the exact comparison.

\section{The five-variable kernel}\label{sec:construction}

Let \(\psi_j=\He_j/\sqrt{j!}\) be the orthonormal probabilists'
Hermite basis for standard Gaussian measure, and write
\(\psi_\beta(x)=\prod_{i=1}^4\psi_{\beta_i}(x_i)\) for
\(\beta\in\N^4\). Here \(\N\) includes zero.
The rational parameters in Appendix~\ref{app:parameters} define
\begin{align}
 B(x)&=\ell x+\sum_{k=1}^6b_k\sin(3kx/4),&
 S(x)&=\sum_{i=1}^4B(x_i),\label{eq:boundary-local}\\
 U(s)&=s+\sum_{k=1}^4a_k\sin(2ks),&
 f(w,x)&=\sgn\bigl(w+U(S(x))\bigr).\label{eq:boundary-outer}
\end{align}
The value assigned to the sign at zero is immaterial: conditioning on
\(x\) shows that the boundary has Gaussian measure zero.
All positive-order derivatives of \(U\circ S\) are bounded, and
\(U\circ S\) has at most linear growth.

For independent standard normal variables \(W,X_1,\ldots,X_4\), set
\begin{equation}\label{eq:hermite-coeff}
 A_{j,\beta}=\E\bigl[f(W,X)\psi_j(W)\psi_\beta(X)\bigr].
\end{equation}
Parseval and \(|f|=1\) give \(\sum_{j,\beta}A_{j,\beta}^2=1\).
Since \(f(-w,-x)=-f(w,x)\), the coefficient \(A_{j,\beta}\) vanishes
when \(j+|\beta|\) is even. Define
\begin{equation}\label{eq:bivariate-kernel}
 \mathcal H(a,b)=\frac\pi2\sum_{j,\beta}
                       A_{j,\beta}^2a^jb^{|\beta|}.
\end{equation}
The preprocessing polynomial and its exact coefficient norm are
\begin{equation}\label{eq:preprocessing}
 P(z)=\sum_{k=1,3,\ldots,81}p_kz^k,
 \qquad q_0:=\sum_k|p_k|=\frac{999999999999}{10^{12}}<1.
\end{equation}
We use the scalar kernel
\begin{equation}\label{eq:scalar-kernel}
 H(z)=\mathcal H(P(z),-z)=\sum_{n\ge1}c_nz^n.
\end{equation}
It has real coefficients and positive odd powers. If \(\|\cdot\|_1\)
denotes the absolute sum of power-series coefficients, then
\begin{equation}\label{eq:absolute-convergence}
 \sum_n|c_n|
 \le\frac\pi2\sum_{j,\beta}A_{j,\beta}^2q_0^j
 \le\frac\pi2.
\end{equation}
Thus the defining series is absolutely convergent on the closed unit
disk, including its two real endpoints.

\section{Transfer to all finite matrices}\label{sec:transfer}

The next proposition isolates the universal part of the proof.
Its five Gaussian coordinates are the channels used for rounding.
The input vectors may have any finite dimension.

\begin{proposition}[Coefficient majorant]\label{prop:transfer}
For the kernel in~\eqref{eq:scalar-kernel}, suppose that
\[
 \Gamma:=c_1-\sum_{n\ge3}|c_n|>0.
\]
Then \(\KG\le\pi/(2\Gamma)\).
\end{proposition}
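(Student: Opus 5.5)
The plan is a Krivine-type argument with a nonlinear rounding step. Fix \(M\in\R^{m\times n}\), a dimension \(d\), and unit vectors \(x_i,y_j\in\R^d\). Put \(t_{ij}=\langle x_i,y_j\rangle\). I would build random signs \(\epsilon_i,\delta_j\) with
\[
 \E[\epsilon_i\delta_j]=\tfrac{2}{\pi}H(t_{ij}).
\]
Then I would compare \(\sum M_{ij}H(t_{ij})\) with \(c_1\sum M_{ij}t_{ij}\), treating the higher-order terms as perturbations that are themselves bounded by \(V(M)\).

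\emph{Step 1 (preprocessing).} I embed into a real Hilbert space \(\mathcal K=\bigoplus_{k}(\R^d)^{\otimes k}\oplus\R^2\). Define
\[
 a_i=\bigoplus_k\sqrt{|p_k|}\,x_i^{\otimes k}\oplus\sqrt{1-q_0}\,e_1,
 \qquad
 b_j=\bigoplus_k\sgn(p_k)\sqrt{|p_k|}\,y_j^{\otimes k}\oplus\sqrt{1-q_0}\,e_2,
\]
with \(e_1\perp e_2\). Since \(q_0\le1\) by~\eqref{eq:preprocessing}, these are unit vectors, and \(\langle a_i,b_j\rangle=P(t_{ij})\). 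Everything lives in a finite-dimensional space, so Gaussian vectors make sense.

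\emph{Step 2 (rounding).} I take independent standard Gaussian vectors: \(G_0\) in the span of the \(a\)'s and \(b\)'s, and \(G_1,\dots,G_4\) in \(\R^d\), all mutually independent. Set
\[
 \epsilon_i=f\bigl(\langle G_0,a_i\rangle,(\langle G_k,x_i\rangle)_{k=1}^4\bigr),
 \qquad
 \delta_j=f\bigl(\langle G_0,b_j\rangle,(-\langle G_k,y_j\rangle)_{k=1}^4\bigr).
\]
Each argument vector is standard Gaussian in \(\R^5\). The pair consists of five independent bivariate Gaussian pairs: correlation \(\rho=P(t_{ij})\) in the \(W\)-channel and \(\sigma=-t_{ij}\) in each \(X\)-channel.

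\emph{Step 3 (the key identity).} I need the multivariate Mehler formula:
\[
 \E[\psi_\alpha(Z)\psi_{\alpha'}(Z')]=\mathbf 1_{\alpha=\alpha'}\,\rho^{\alpha_0}\sigma^{\alpha_1+\dots+\alpha_4}
\]
for such a correlated pair. This follows channel by channel from the one-dimensional case and independence. Expanding \(f\in L^2\) in the Hermite basis~\eqref{eq:hermite-coeff} and using \(|\rho|,|\sigma|\le1\), I get \(\E[\epsilon_i\delta_j]=\sum A_{j,\beta}^2\rho^j\sigma^{|\beta|}\). The series converges absolutely by Parseval, and the \(L^2\)-continuity of the bilinear form justifies exchanging sum and expectation, even when \(|\rho|=1\). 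By~\eqref{eq:bivariate-kernel} and~\eqref{eq:scalar-kernel} this equals \(\tfrac2\pi H(t_{ij})\). I expect this step to be the main technical point, though it is standard. The subtlety is that \(f\) is discontinuous, so the exchange must be done in \(L^2\) rather than pointwise.

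\emph{Step 4 (majorant).} By~\eqref{eq:absolute-convergence},
\[
 \tfrac2\pi\Bigl|\sum M_{ij}H(t_{ij})\Bigr|
 =\Bigl|\E\sum M_{ij}\epsilon_i\delta_j\Bigr|
 \le S(M).
\]
On the other hand, \(t_{ij}^n=\langle x_i^{\otimes n},y_j^{\otimes n}\rangle\) with unit vectors in \((\R^d)^{\otimes n}\cong\R^{d^n}\), so \(|\sum M_{ij}t_{ij}^n|\le V(M)\). Hence
\[
 \Bigl|\sum M_{ij}H(t_{ij})\Bigr|\ge c_1\Bigl|\sum M_{ij}t_{ij}\Bigr|-\sum_{n\ge3}|c_n|\,V(M).
\]
Here the sum over \(n\) exchanges with the finite sum over \(i,j\) by absolute convergence on \([-1,1]\). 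Even-indexed \(c_n\), if any, are also absorbed, since only absolute values are used.

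Choosing the \(x_i,y_j\) so that \(|\sum M_{ij}t_{ij}|\ge V(M)-\eta\) gives \(\tfrac\pi2 S(M)\ge c_1(V(M)-\eta)-(c_1-\Gamma)V(M)\). Letting \(\eta\to0\) gives \(\Gamma V(M)\le\tfrac\pi2S(M)\). Since \(\Gamma>0\), this is \(V(M)\le\tfrac{\pi}{2\Gamma}S(M)\) for every finite \(M\), so \(\KG\le\pi/(2\Gamma)\).
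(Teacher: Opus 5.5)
Your proposal is correct and follows essentially the paper's proof: the same feature maps $L_P,R_P$, the same five-channel Gaussian rounding with the Hermite covariance identity justified by $L^2$ continuity (including at degenerate correlations), and the same tensor-power bound $|\sum_{i,j}M_{ij}t_{ij}^n|\le V(M)$. The one difference is that you use $\eta$-approximate maximizers and absolute values, whereas the paper shows $V(M)$ is attained (by compactness of unit-diagonal PSD matrices) and negates vectors so that $\sum M_{ij}t_{ij}=V(M)$. Also, your remark that even coefficients are ``absorbed'' does not cover $c_2$, which is not in $\sum_{n\ge3}$; the argument instead needs $c_2=0$, which holds because $H$ is odd, and the paper uses this implicitly as well.
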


\begin{proof}
Fix \(M\in\R^{m\times n}\). Every family of \(m+n\) vectors spans
a space of dimension at most \(m+n\). The positive semidefinite
\((m+n)\)-by-\((m+n)\) matrices with diagonal entries one form a
compact set, and every such matrix admits a real Gram realization.
Consequently the supremum in~\eqref{eq:vector-value} is attained.
Choose a maximizing family and, if necessary, negate all left vectors
so that, with \(t_{ij}=\langle x_i,y_j\rangle\),
\begin{equation}\label{eq:maximizing-vectors}
 \sum_{i,j}M_{ij}t_{ij}=V(M).
\end{equation}
For every positive integer \(k\), tensor powers give another unit-vector
family. Hence
\begin{equation}\label{eq:tensor-comparison}
 \left|\sum_{i,j}M_{ij}t_{ij}^{k}\right|\le V(M).
\end{equation}

Introduce two unit feature maps
\begin{align}
 L_P(x)&=\bigoplus_k\sqrt{|p_k|}\,x^{\otimes k}
                      \oplus\sqrt{1-q_0}\,e_L,\nonumber\\
 R_P(y)&=\bigoplus_k\sgn(p_k)\sqrt{|p_k|}\,y^{\otimes k}
                      \oplus\sqrt{1-q_0}\,e_R,\label{eq:features}
\end{align}
where \(e_L,e_R\) are mutually orthogonal and orthogonal to all tensor
summands. Their cross inner product is
\(\langle L_P(x),R_P(y)\rangle=P(\langle x,y\rangle)\).
A common standard Gaussian vector in this finite feature space gives
all the primary projections \(W_i,W'_j\).
Four further independent standard Gaussian vectors in the original
vector space give
\[
 X_{i,r}=\langle g_r,x_i\rangle,
 \qquad Y_{j,r}=-\langle g_r,y_j\rangle,
 \qquad 1\le r\le4.
\]
At each endpoint the five coordinates are independent standard normals.
For each pair \((i,j)\), their cross correlations are
\(P(t_{ij}),-t_{ij},-t_{ij},-t_{ij},-t_{ij}\).

Set \(\epsilon_i=f(W_i,X_i)\) and
\(\delta_j=f(W'_j,Y_j)\). These are jointly defined signs for the
whole matrix. The Hermite covariance identity yields
\begin{equation}\label{eq:rounding-covariance}
 \E[\epsilon_i\delta_j]
 =\sum_{j',\beta}A_{j',\beta}^2P(t_{ij})^{j'}(-t_{ij})^{|\beta|}
 =\frac2\pi H(t_{ij}).
\end{equation}
This identity also holds at degenerate correlations. Indeed, finite
Hermite expansions converge in \(L^2\) under both standard Gaussian
marginals, and Cauchy--Schwarz controls their product error.

Every realization of the signs has objective at most \(S(M)\).
Using~\eqref{eq:absolute-convergence} to interchange the absolutely
convergent series with the finite matrix sum, followed by
\eqref{eq:maximizing-vectors} and~\eqref{eq:tensor-comparison}, gives
\begin{align*}
 S(M)&\ge\E\sum_{i,j}M_{ij}\epsilon_i\delta_j\\
 &=\frac2\pi\sum_{n\ge1}c_n\sum_{i,j}M_{ij}t_{ij}^n\\
 &\ge\frac2\pi\left(c_1-\sum_{n\ge3}|c_n|\right)V(M).
\end{align*}
This proves the proposition.
\end{proof}

\section{Certification of the finite coefficient head}\label{sec:head}

We compute through degree \(N=403\). Since \(P(0)=0\), only
Hermite indices with \(j+|\beta|\le N\) can contribute to these scalar
degrees. The finite calculation uses Fourier separation of the four
identical auxiliary coordinates; polynomial products collect their
complete ordered multiindex sum.

\subsection{Finite Fourier formula}

Write \(\phi(u)=e^{-u^2/2}/\sqrt{2\pi}\), and let
\(q_j(s)=\E[\sgn(W+s)\psi_j(W)]\).
For \(j\ge1\),
\begin{equation}\label{eq:conditional-fourier}
 q_j(s)=\sqrt{2/\pi}\,i^{j-1}\int_{\R}V_j(u)e^{ius}\,du,
 \qquad V_j(u)=\frac{\phi(u)u^{j-1}}{\sqrt{j!}}.
\end{equation}
For \(j=0\), use the symmetric principal value with
\(V_0(u)=\phi(u)/u\), fixed by
\(q_0(s)=\erf(s/\sqrt2)\).
Put \(A(s)=U(s)-s\), and expand the real phase
\(e^{iuA(s)}\) through degree \(M=31\).
The exact Laurent coefficients
\begin{equation}\label{eq:phase-laurent}
 \frac1{m!}\left(\sum_{k=1}^4\frac{a_k}2
                         (\zeta^k-\zeta^{-k})\right)^m
 =\sum_kr_{m,k}\zeta^k
\end{equation}
give the modified Fourier profile
\begin{equation}\label{eq:modified-profile}
 \widetilde V_j(u)=\sum_{m=0}^{31}\sum_k
                    r_{m,k}(u-2k)^mV_j(u-2k).
\end{equation}
For \(j=0\), the sole uncancelled principal-value singularity is the
\(m=0\) term at zero.

For real \(u\), the characteristic coefficient
\[
 p_k(u):=i^{-k}\E[\psi_k(X)e^{iuB(X)}]
\]
is real, by the parity of \(B\). Let \(\widetilde p_k(u)\) be its
finite spatial quadrature with nodes \(v/256\),
\(-5120\le v\le5120\). The Gaussian weight and the normalization
\(1/\sqrt{k!}\) are included in this quadrature.
Use \(\Delta=2\pi/40\) and the positive frequencies
\(u_n=n\Delta\), \(1\le n\le F=768\). Define the polynomials
\begin{align}
 K_{uv}(z)&=\sum_{k=0}^{403}\widetilde p_k(u)
                                      \widetilde p_k(v)z^k,\nonumber\\
 W_{uv}(z)&=\sum_{j=0}^{403}\widetilde V_j(u)
                                      \widetilde V_j(v)z^j.\label{eq:pair-polynomials}
\end{align}
Writing \(\Odd_{\le N}\) for restriction to positive odd degrees at
most \(N\), the nonzero-frequency contribution is
\begin{equation}\label{eq:finite-head}
 4\Delta^2\Odd_{\le403}\sum_{u,v\in\{u_1,\ldots,u_F\}}
                W_{uv}(P(z))K_{uv}(-z)^4.
\end{equation}
To see the factor and sign, pair the positive and negative frequencies
in~\eqref{eq:conditional-fourier}. For every surviving
\(j+|\beta|\) the phase is real with square one.
The square of the resulting factor
\(2\sqrt{2/\pi}\Delta\), followed by \(\pi/2\), is
\(4\Delta^2\). The fourth power in~\eqref{eq:finite-head} sums over
all four auxiliary indices, while \(-z\) supplies their reflection.
A triangular implementation evaluates \(F(F+1)/2=295296\) pairs,
counting off-diagonal pairs twice.

\subsection{The zero Fourier node}

The zero node has two contributions. For odd \(j,k\ge1\), set
\begin{align*}
 \alpha_j&=2\Delta\sum_{u>0}\widetilde V_j(u)\widetilde p_0(u)^4,
 &z_j&=\Delta\widetilde V_j(0),\\
 \eta_k&=2\Delta\sum_{u>0}\widetilde V_0(u)
                             \widetilde p_k(u)\widetilde p_0(u)^3,
 &t_k&=\Delta\phi(0)\left(\ell\mathbf1_{k=1}
     +\sum_{l=1}^6\frac{b_l(3l/4)^k e^{-(3l/4)^2/2}}{\sqrt{k!}}\right).
\end{align*}
The complete correction to~\eqref{eq:finite-head}, restricted to degree
\(403\), is
\begin{equation}\label{eq:zero-correction}
 \sum_{j\text{ odd}}(2\alpha_jz_j+z_j^2)P(z)^j
 -4\sum_{k\text{ odd}}(2\eta_kt_k+t_k^2)z^k.
\end{equation}
For positive primary index, \(p_k(0)=\mathbf1_{k=0}\) leaves only the
first sum. For primary index zero, the principal-value quotient has a
nonzero zero limit exactly when one auxiliary index is positive.
Its derivative is
\(\phi(0)p'_k(0)\), as displayed in \(t_k\). There are four possible
positions, each reflected, giving the factor \(-4\).
Both cross terms and squares are therefore present.

\subsection{Analytic error and certified objective}

Appendix~\ref{app:head-errors} bounds the phase truncation, all Poisson
aliases, the omitted frequencies, the spatial lattice, its omitted nodes,
and the residual phase allowance. The strip estimates use the
exponential trapezoidal-rule principle~\cite{TrefethenWeideman2014}.
The fixed parameters are
\begin{equation}\label{eq:head-parameters}
 N=403,\qquad M=31,\qquad F=768,\qquad
 \Delta=2\pi/40,\qquad h_x=1/256,\qquad A_x=20.
\end{equation}
The certificate specifies all \(404\) rational contour choices.
If \(\delta\) bounds the Euclidean error of the retained Hermite
array, then Bessel's inequality and Cauchy--Schwarz give
\[
 \sum_{j,\beta}|A_{j,\beta}^2-\widetilde A_{j,\beta}^2|
 \le\delta(2+\delta).
\]
Composition by \(P\) and \(-z\), and restriction of degree, contract
the coefficient absolute norm. Thus the complete analytic coefficient
error is bounded by the stored rational allowance \(E\), with
\begin{equation}\label{eq:head-error-value}
 \frac\pi2\delta(2+\delta)\le E
              <1.597802294094\times10^{-10}.
\end{equation}

Every finite operation in~\eqref{eq:finite-head} and
\eqref{eq:zero-correction} is evaluated by outward-rounded ball
arithmetic. Let \([l_n,u_n]\) be the resulting coefficient intervals.
The certificate contains all \(404\) intervals, including zero and
even indices. Define
\begin{equation}\label{eq:head-objective}
 g=l_1-\sum_{n=3,5,\ldots,403}\max\{|l_n|,|u_n|\}
                                      -E.
\end{equation}
The coefficient objective is \(1\)-Lipschitz in the coefficient
absolute norm, so the total allowance is subtracted once.
Exact rational aggregation gives
\begin{equation}\label{eq:head-certified}
 c_1-\sum_{n=3,5,\ldots,403}|c_n|\ge g
 >\frac{88263349}{10^8}.
\end{equation}
For orientation, the lower endpoint is
\(g=0.8826334911000137\ldots\), and
the finite linear coefficient is approximately \(0.8826383020122081\).
The inequalities in~\eqref{eq:head-certified} use the exact rational
endpoints stored in the certificate.

\section{A bound for the entire infinite tail}\label{sec:tail}

Let \(D=z\,d/dz\). We prove a bound on \(\|D^4H\|_2\), where
\(L^2\) on the unit circle uses normalized arc measure.
The calculation combines nonnegative Hermite energy series with complex
Gaussian density bounds. Primary smoothing supplies the boundary
regularity needed for Parseval.

\subsection{Primary smoothing and mixed energies}

Write \(L(x)=U(S(x))\). For \(0<r<1\), smooth the primary variable:
\begin{equation}\label{eq:smoothing}
 G_r(w,x)=\E_Z f(\sqrt r\,w+\sqrt{1-r}\,Z,x)
 =\erf\left(\frac{\sqrt r\,w+L(x)}{\sqrt{2(1-r)}}\right).
\end{equation}
Its Hermite coefficient at \((j,\beta)\) is
\(r^{j/2}A_{j,\beta}\). Every derivative of positive fixed order is a
finite sum of bounded derivatives of \(L\), multiplied by a polynomial
in \(\sqrt r\,w+L(x)\) and a Gaussian exponential. It is therefore in
Gaussian \(L^2\).

For \(p,s\ge0\), \(p+s\ge1\), define
\begin{equation}\label{eq:mixed-energy}
 S_{p,s}(r)=\sum_{j,\beta}(j)_p(|\beta|)_s
                         r^{j-p}A_{j,\beta}^2,
\end{equation}
where \((n)_k=n(n-1)\cdots(n-k+1)\), with value zero for \(n<k\).
Terms with \(j<p\) are omitted, and the nonnegative power series
defines \(S_{p,s}(0)\) by continuity.
The Gaussian derivative identity gives
\begin{equation}\label{eq:sobolev-identity}
 \sum_{|\alpha|=s}\frac{s!}{\alpha!}
       \|\partial_w^p\partial_x^\alpha G_r\|_2^2
 =r^pS_{p,s}(r)<\infty.
\end{equation}
Indeed, differentiation lowers a normalized Hermite polynomial by one
with factor equal to the square root of its degree. The multinomial
identity
\(\sum_{|\alpha|=s}(s!/\alpha!)(\beta)_\alpha=(|\beta|)_s\)
then gives~\eqref{eq:sobolev-identity}.

Choosing \(q_0<r<1\) shows that every specified partial derivative of
\(\mathcal H\) converges absolutely and uniformly on
\(|a|\le q_0\), \(|b|\le1\). For example,
\begin{equation}\label{eq:energy-partial-bound}
 |\partial_a^p\partial_b^s\mathcal H(a,b)|
 \le\frac\pi2 S_{p,s}(|a|),\qquad |a|<1,\quad |b|\le1.
\end{equation}
The same domination at a larger radius justifies this inequality at
\(|b|=1\). In particular, \(H\) has four continuous angular
derivatives on the unit circle.

For the numerical evaluation of~\eqref{eq:mixed-energy}, put
\(v=1-r\), \(a_r=1+r\), and define
\[
 J_1(t)=1,\quad J_2(t)=-t/v,\quad
 J_3(t)=t^2/v^2-1/v,\quad J_4(t)=-t^3/v^3+3t/v^2.
\]
The \(m\)-th derivative of \(\erf(t/\sqrt{2v})\) is
\(\sqrt{2/\pi}\,v^{-1/2}e^{-t^2/(2v)}J_m(t)\).
Let \(\mathcal B_{\alpha,k}(x)\) be the multivariate Bell polynomial
determined by
\[
 \partial_x^\alpha h(L(x))
       =\sum_k h^{(k)}(L(x))\mathcal B_{\alpha,k}(x),
 \qquad \mathcal B_{0,0}=1.
\]
For \(1\le p+|\alpha|\le4\), it follows that
\begin{equation}\label{eq:smoothed-derivative}
 \partial_w^p\partial_x^\alpha G_r
 =r^{p/2}\sqrt{2/\pi}\,v^{-1/2}e^{-t^2/(2v)}
          \sum_kJ_{p+k}(t)\mathcal B_{\alpha,k}(x),
 \qquad t=\sqrt r\,w+L(x).
\end{equation}
Conditional on \(x\), the variable \(t\) is normal with mean \(L\)
and variance \(r\). Multiplication of its density by \(e^{-t^2/v}\)
and square completion give mass
\(\sqrt{v/a_r}\,e^{-L^2/a_r}\), with tilted normal variable
\[
 T\sim N\left(\frac{vL}{a_r},\frac{rv}{a_r}\right).
\]
Consequently \(S_{p,s}(r)\) is obtained from
\begin{equation}\label{eq:conditional-energy}
 \frac{2}{\pi\sqrt{1-r^2}}
 \sum_{|\alpha|=s}\frac{s!}{\alpha!}
 \E_X\left[e^{-L(X)^2/a_r}
    \sum_{k,l}\mathcal B_{\alpha,k}(X)\mathcal B_{\alpha,l}(X)
                       \E_T[J_{p+k}(T)J_{p+l}(T)]\right].
\end{equation}
The factor \(r^p\) from~\eqref{eq:smoothed-derivative} cancels the
one in~\eqref{eq:sobolev-identity}. Formula~\eqref{eq:conditional-energy}
retains the signs of every cross term.

Expanding \(L=U\circ S\) in this formula produces a finite rational
linear combination of integrals of the form
\begin{equation}\label{eq:energy-integral-form}
 \E\left[e^{-U(S)^2/a_r}U(S)^h
       \prod_{j=1}^4U^{(j)}(S)^{o_j}
       \prod_{i=1}^4\prod_{j=1}^4B^{(j)}(X_i)^{\lambda_{i,j}}\right].
\end{equation}
Appendix~\ref{app:energy-errors} gives their Fourier quadrature and all
analytic allowances. The calculation encloses all \(14\) pairs
\((p,s)\) with \(1\le p+s\le4\) at each radius
\(81/100,83/100,\ldots,97/100\).
There are \(25\) distinct local transforms and \(109\) scalar profiles
at each radius, giving \(981\) signed scalar calculations and
\(126\) energy endpoints. The derivative polynomials are also checked
by direct differentiation over all ordered auxiliary coordinate tuples.

Every power series \(S_{p,s}(r)\) has nonnegative coefficients and
nonnegative integer exponents. It is increasing and convex.
If certified upper bounds at consecutive radii \(r_0<r_1\) are
\(E_0,E_1\), then
\begin{equation}\label{eq:energy-interpolation}
 S_{p,s}(r)\le
 \frac{r_1-r}{r_1-r_0}E_0+
 \frac{r-r_0}{r_1-r_0}E_1\qquad(r_0\le r\le r_1).
\end{equation}
Below the smallest radius, its endpoint bound applies by monotonicity.
Monotonicity also permits replacing an endpoint bound by a smaller bound
from a later radius.

\subsection{Complex Gaussian density bounds}

A complementary estimate is effective when the correlations are away
from the real endpoints. For \(z\in\mathbb C\) with
\(|\operatorname{Re}z|<1\), the analytically continued Gaussian pair
density is
\[
 \rho_z(x,y)=\frac1{2\pi\sqrt{1-z^2}}
       \exp\left(-\frac{x^2-2zxy+y^2}{2(1-z^2)}\right),
\]
with the branch obtained by continuation from zero.
In coordinates \(u=(x+y)/\sqrt2\), \(v=(x-y)/\sqrt2\), its
absolute value is a positive constant times the product of real Gaussian
densities with variances
\begin{equation}\label{eq:density-mass}
 \sigma_\pm(z)=\frac{|1\pm z|^2}{1\pm\operatorname{Re}z},
 \qquad
 d_0(z)=\iint|\rho_z(x,y)|\,dx\,dy
      =\sqrt{\frac{|1-z^2|}{1-(\operatorname{Re}z)^2}}.
\end{equation}
The real-part condition ensures integrability and fixes the square-root
branch: \(1-z^2\) stays away from the nonpositive real axis.

The following finite recurrence bounds all density derivatives through
order four. With formal nonnegative variables \(U,V\), put
\begin{align*}
 L_k(U,V;z)={}&\frac{(k-1)!}{2}
          \left(|1+z|^{-k}+|1-z|^{-k}\right)\\
 &+\frac{k!}{2}\left(U|1+z|^{-k-1}
                         +V|1-z|^{-k-1}\right),\\
 T_0(U,V;z)={}&1,\\
 T_j(U,V;z)={}&\sum_{k=1}^j\binom{j-1}{k-1}
                    T_{j-k}(U,V;z)L_k(U,V;z).
\end{align*}
The polynomial \(L_k(u^2,v^2;z)\) bounds the absolute value of the
\(k\)-th derivative of \(\log\rho_z\). The Bell recurrence thus
bounds \(|\partial_z^j\rho_z|\) by
\(|\rho_z|T_j(u^2,v^2;z)\).
Define the positive linear functional
\[
 \mathcal M_z(U^aV^b)=
 \frac{(2a)!}{2^aa!}\sigma_+(z)^a
 \frac{(2b)!}{2^bb!}\sigma_-(z)^b,
 \qquad D_j(z)=d_0(z)\mathcal M_z(T_j).
\]
It follows that \(\iint|\partial_z^j\rho_z|\le D_j(z)\).
Four auxiliary pairs share the same correlation, so their joint
order-\(s\) bound is
\begin{equation}\label{eq:aux-density-bound}
 D_s^{[4]}(b)=s!\,[u^s]
                 \left(\sum_{j=0}^{4}\frac{D_j(b)}{j!}u^j\right)^4.
\end{equation}
Integrating the product of the five pair densities against the two
bounded sign functions gives
\begin{equation}\label{eq:density-partial-bound}
 |\partial_a^p\partial_b^s\mathcal H(a,b)|
 \le\frac\pi2D_p(a)D_s^{[4]}(b),
 \quad |a|\le q_0,\quad |b|\le1,\quad
 |\operatorname{Re}a|,|\operatorname{Re}b|<1.
\end{equation}
The integral and Hermite definitions agree first in the open bidisk by
Gaussian expansion and then on the stated boundary arcs by analytic
continuation and the regularity proved above.

\subsection{Circle enclosure and coefficient tail}

For fixed \(|z|=1\), set \(a(u)=P(ze^u)\) and \(b(u)=-ze^u\).
The full fourth-order chain rule is
\begin{equation}\label{eq:chain-rule}
 D^4H(z)=\sum_{1\le p+s\le4}
       \frac{4!}{p!s!}[u^4]
       (a(u)-a(0))^p(b(u)-b(0))^s\,
       \partial_a^p\partial_b^s\mathcal H(a(0),b(0)).
\end{equation}
Thus every required coefficient is a polynomial in the first four
Euler derivatives of \(P\) and of \(-z\).
On each interval, outward interval evaluation bounds these coefficients
and each applicable partial derivative using
\eqref{eq:energy-partial-bound}--\eqref{eq:energy-interpolation} or
\eqref{eq:density-partial-bound}. When both are available, the smaller
upper bound applies. At the real auxiliary endpoints, the mixed-energy
bound supplies the enclosure.

The program uses \(+z\) for the auxiliary jet. Replacing it by the
physical \(-z\) multiplies a fixed auxiliary-order coefficient by
\((-1)^s\); its absolute value is unchanged. Likewise,
\(D_j(-z)=D_j(z)\) by interchanging the two Gaussian coordinates.
The computed majorant therefore bounds the kernel
in~\eqref{eq:scalar-kernel}.

The first quadrant is partitioned into the \(2048\) closed intervals
\begin{equation}\label{eq:circle-panels}
 I_i=\left[\frac{i\pi}{4096},\frac{(i+1)\pi}{4096}\right],
 \qquad 0\le i<2048.
\end{equation}
For every interval the calculation encloses all \(14\) mixed derivative
terms in~\eqref{eq:chain-rule}. Write \(M_i\) for its upper bound on
\(|D^4H(e^{i\theta})|\) throughout \(I_i\).
The real odd coefficients give fourfold symmetry, so normalized circle
integration yields
\begin{equation}\label{eq:circle-bound}
 \|D^4H\|_2^2\le\frac1{2048}\sum_{i=0}^{2047}M_i^2
       \le C_4^2,
 \qquad C_4=\frac{1006651833}{2048}.
\end{equation}
The second comparison is performed with rational upper endpoints of all
panel enclosures.

\begin{theorem}[Complete tail bound]\label{thm:tail}
The scalar kernel satisfies
\[
 \sum_{\substack{n>403\\n\text{ odd}}}|c_n|
 \le\frac{1006651833/2048}{\sqrt{14\cdot403^7}}
 <\frac1{10000}.
\]
\end{theorem}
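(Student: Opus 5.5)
The plan is to turn the $L^2$ bound~\eqref{eq:circle-bound} on $D^4H$ into an $\ell^1$ bound on the coefficient tail, using Parseval and then Cauchy--Schwarz.

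First, the regularity statement after~\eqref{eq:energy-partial-bound} says that $H$ has four continuous angular derivatives on the unit circle. Since $D=z\,d/dz$ acts on $e^{i\theta}$ as $-i\,d/d\theta$, the Fourier series of $D^4H(e^{i\theta})$ is $\sum_n n^4c_ne^{in\theta}$. The justification is that $H$ is given on the circle by an absolutely convergent series, by~\eqref{eq:absolute-convergence}, and $D^4H$ is continuous. Integrating by parts four times in $\theta$ then identifies the Fourier coefficients of the continuous function $D^4H$ with $n^4c_n$. Parseval for normalized arc measure gives
\[
 \sum_{n\ge1}n^8c_n^2=\|D^4H\|_2^2\le C_4^2 .
\]
Here the last inequality is~\eqref{eq:circle-bound}. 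Its content is the panel enclosure: the chain rule~\eqref{eq:chain-rule}, each mixed partial bounded by the smaller of~\eqref{eq:energy-partial-bound}--\eqref{eq:energy-interpolation} and~\eqref{eq:density-partial-bound}, and the fourfold symmetry reducing the circle to the first quadrant. I take all of this as established.

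Second, apply Cauchy--Schwarz over odd $n>403$, that is, $n\ge405$:
\[
 \sum_{\substack{n>403\\ n\text{ odd}}}|c_n|
 \le\Bigl(\sum n^8c_n^2\Bigr)^{1/2}
    \Bigl(\sum_{\substack{n\ge405\\ n\text{ odd}}}n^{-8}\Bigr)^{1/2}.
\]
For the sum over odd $n$, compare with an integral using step $2$. Since $x^{-8}$ is convex, the midpoint rule gives $n^{-8}\le\frac12\int_{n-1}^{n+1}x^{-8}\,dx$. Summing over odd $n\ge405$ yields
\[
 \sum_{\substack{n\ge405\\ n\text{ odd}}} n^{-8}
 \le\frac12\int_{404}^\infty x^{-8}\,dx
 =\frac{1}{14\cdot 404^7}\le\frac1{14\cdot403^7}.
\]
Combining the two displays gives exactly the displayed bound $C_4/\sqrt{14\cdot403^7}$.

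Third, the numerical comparison $C_4/\sqrt{14\cdot 403^7}<10^{-4}$ is an exact rational check. Square both sides: it is equivalent to
\[
 10^8\cdot 1006651833^2<2048^2\cdot14\cdot403^7,
\]
which is an integer inequality verified directly. As a sanity check, $C_4\approx4.915\times10^5$ and $\sqrt{14\cdot403^7}\approx 3.74\cdot403^{3.5}\approx 4.92\times10^9$, so the ratio is about $0.99996\times10^{-4}$. This is tight, so the check must be done in exact arithmetic, not floating point.

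The only delicate point of principle is the Parseval step. One must know that $D^4H$ on the circle is a genuine $L^2$ function whose Fourier coefficients are $n^4c_n$, and not merely a formal series. I would handle this by Abel means. For $\rho<1$, $D^4H(\rho z)=\sum n^4c_n\rho^nz^n$ holds termwise, so Parseval at radius $\rho$ is valid. The uniform domination behind~\eqref{eq:energy-partial-bound} (taking $q_0<r<1$) makes $D^4H(\rho\,\cdot)\to D^4H$ uniformly on the circle as $\rho\to1$. Monotone convergence of $\sum n^8c_n^2\rho^{2n}$ then gives $\sum n^8c_n^2\le\|D^4H\|_2^2$. This inequality is all that is needed.
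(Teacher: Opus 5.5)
Your proof is correct and follows the paper's argument: Parseval for $D^4H$ on the circle, Cauchy--Schwarz against $\sum n^{-8}$ over odd $n>403$, and an exact integer comparison of $10^8C_4^2$ with $14\cdot403^7$. The differences are minor. You use the convexity (midpoint) comparison to get $1/(14\cdot404^7)$ and then relax to $403^7$, where the paper uses monotonicity to reach $\frac12\int_{403}^\infty x^{-8}\,dx$ directly; you also make explicit, via Abel means and uniform continuity of $D^4H$ on the closed disk, the inequality $\sum n^8c_n^2\le\|D^4H\|_2^2$ that the paper compresses into ``boundary regularity and Parseval''.
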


\begin{proof}
Boundary regularity and Parseval give
\[
 \sum_{n\ge1}n^8|c_n|^2=\|D^4H\|_2^2\le C_4^2.
\]
For odd \(N\), the positive odd indices greater than \(N\) are
\(N+2,N+4,\ldots\). Since \(x^{-8}\) decreases,
\[
 \sum_{\substack{n>N\\n\text{ odd}}}n^{-8}
 \le\frac12\int_N^\infty x^{-8}\,dx=\frac1{14N^7}.
\]
Cauchy--Schwarz gives the first bound with \(N=403\).
The strict numerical comparison follows from the exact identity
\[
 14\cdot403^7-10^8C_4^2
 =\frac{150131951650291275887}{16384}>0.
\]
\end{proof}

\section{Exact conclusion and computational record}\label{sec:conclusion}

Combining~\eqref{eq:head-certified} with the tail theorem gives
\begin{equation}\label{eq:gamma-final}
 \Gamma=c_1-\sum_{n\ge3}|c_n|>
 \Gamma_0:=\frac{88253349}{10^8}.
\end{equation}
For a rational bound on \(\pi\), define
\[
 T_N(x)=\sum_{k=0}^{N-1}\frac{(-1)^kx^{2k+1}}{2k+1}.
\]
For \(0<x<1\), the alternating-series bound gives
\(T_{2r}(x)<\arctan x<T_{2r+1}(x)\).
Machin's identity therefore gives the rational upper bound
\begin{equation}\label{eq:machin}
 \pi<\pi_+:=16\left(T_{100}(1/5)
                  +\frac{1}{201\cdot5^{201}}\right)
                     -4T_{30}(1/239)
       <\frac{3141592653589794}{10^{15}}.
\end{equation}
The last comparison is an integer calculation. Moreover,
\begin{equation}\label{eq:exact-final-comparison}
 2\Gamma_0\frac{44496791}{25000000}
 -\frac{3141592653589794}{10^{15}}
 =\frac{17031633}{2500000000000000}>0.
\end{equation}
Proposition~\ref{prop:transfer} now yields
\[
 \KG\le\frac\pi{2\Gamma}
       <\frac\pi{2\Gamma_0}<\frac{44496791}{25000000},
\]
which proves Theorem~\ref{thm:main}.

Table~\ref{tab:coverage} records the finite coverage of the computation.
All transcendental quantities are enclosed by ball arithmetic based on
Arb~\cite{Johansson2017}. The production run used Python \(3.12\),
python-flint \(0.8.0\), and FLINT \(3.3.1\).
Head primitives and pairs used \(512\)-bit precision; analytic head
error and final assembly used \(640\) bits; mixed energies used
\(256\) bits; and circle bounds used \(384\) bits.
The final coefficient aggregation and comparisons use exact rational
arithmetic.

\begin{table}[htbp]
\centering
\caption{Complete finite coverage and exact output thresholds.}
\label{tab:coverage}
\small
\begin{tabular}{@{}p{.59\linewidth}r@{}}
\toprule
Component & Coverage or certified bound\\
\midrule
Scalar coefficients, degrees \(0\) through \(403\) & \(404\) intervals\\
Positive Fourier nodes & \(768\)\\
Hermite characteristics and profiles at each node & \(404+404\)\\
Triangular frequency pairs & \(295296\)\\
Ordered pair terms, including multiplicity & \(589824\)\\
Local transforms for mixed energies & \(25\)\\
Signed scalar integrals & \(981\)\\
Mixed-energy orders at each of nine radii & \(14\)\\
First-quadrant circle panels & \(2048\)\\
Mixed derivative bounds across the panels & \(28672\)\\
Finite head objective & \(>88263349/10^8\)\\
Fourth angular-derivative norm & \(\le1006651833/2048\)\\
Infinite coefficient tail & \(<1/10000\)\\
\bottomrule
\end{tabular}
\end{table}

Additional checks used different algebraic organizations of the finite
expressions. A direct multiindex calculation reproduced the low-degree
head through degree \(5\); an exact synthetic calculation compared the
multiindex and pair-product formulas; and direct ordered-coordinate
differentiation matched all \(126\) mixed-energy polynomials.
These checks accompany the full source and analytic review of the
certificate pipeline.

\section{Reproducing the certificate}\label{sec:reproduction}

The ancillary source package contains the rational input
\texttt{physical.json}, the head integration plan, all retained scalar
coefficient intervals, the \(2048\) circle bounds, and executable
verification and regeneration commands.
From its source directory, the retained certificate is checked with
\begin{verbatim}
python -B verify.py
\end{verbatim}
This standard-library command checks the file hashes, preprocessing norm,
coefficient objective, circle aggregation, infinite-tail comparison,
Machin bound and final rational inequality.

For a full regeneration, create a Python \(3.12\) virtual environment,
install the pinned requirements, and run
\begin{verbatim}
python -m pip install -r requirements.txt
python -B reproduce.py full --workdir ../full-run --workers 4
\end{verbatim}
The work directory must be new. The driver computes every component in
Table~\ref{tab:coverage}, assembles the new certificate, and writes its
verification result. It downloads twelve arithmetic modules from the
pinned source commit and verifies their SHA-256 hashes. A verified cache
supports offline execution:
\begin{verbatim}
python -B reproduce.py bootstrap --cache-dir ../dependency-cache
\end{verbatim}
Then append \texttt{--cache-dir ../dependency-cache --offline} to the
full-regeneration command, using a new work directory.
Python assertions must remain enabled. A shorter
\texttt{reproduce.py smoke} run checks representative primitive and pair
calculations, the complete analytic error plans, and all exact
mixed-energy polynomials. The package's \texttt{USAGE.txt} gives the
command details and output locations.

The construction and arithmetic kernels are pinned to Ren's repository,
commit \texttt{af6990520d97d232433a55c93a5e9b6bc6b56fa7}~\cite{Ren2026}.
The full computation reported here regenerates the head, mixed energies
and circle estimate from those rational inputs. The package records the
source hashes and the checks performed on the resulting data.

\paragraph{Acknowledgement.}
OpenAI Codex assisted with mathematical analysis, implementation,
verification and manuscript preparation.

\appendix
\section{Analytic errors in the coefficient head}\label{app:head-errors}

We give the error bounds for the finite Fourier calculation, following
the analytic route of~\cite{Ren2026}. All Hermite polynomials are normalized
as $\psi_k=\He_k/\sqrt{k!}$, and $\phi(x)=e^{-x^2/2}/\sqrt{2\pi}$.
Finite arithmetic is performed with outward-rounded intervals; the
allowances below account for analytic approximation.

\subsection{Phase expansion and Gaussian envelopes}

Put $c=\sqrt{2/\pi}$ and $A_*=\sum_l|a_l|$.
For $j\ge1$, the conditional coefficient has Fourier representation
\[
 q_j(s)=c i^{j-1}\int_{\R}
       \frac{\phi(u)u^{j-1}}{\sqrt{j!}}e^{ius}\,du.
\]
For $j=0$, the integrand is $\phi(u)/u$ and the integral is its
symmetric principal value. Differentiation and the value at zero give
$q_0(s)=\erf(s/\sqrt2)$.
Let
\[
 q_j^{[M]}(s)=\sum_{m=0}^{M}\frac{A(s)^m}{m!}q_j^{(m)}(s),
 \qquad
 \widetilde V_j=\text{its Fourier weight in the preceding convention}.
\]
Taylor's formula for $e^{iuA(s)}$, whose real-phase derivatives have
modulus one, gives
\begin{equation}\label{eq:head-phase}
 \sup_{s\in\R}|q_j(U(s))-q_j^{[M]}(s)|\le e_j,
 \qquad
 e_j=\frac{cA_*^{M+1}\E|Z|^{j+M}}{(M+1)!\sqrt{j!}},
 \qquad
 \delta_{\rm ph}=\left(\sum_{j=0}^{N}e_j^2\right)^{1/2}.
\end{equation}
Here $Z$ is standard normal. For $j=0$, the remainder cancels the
factor $1/u$. Bessel's inequality in the auxiliary variables proves the
array bound $\delta_{\rm ph}$, with no multiindex counting factor.

For $0<t<1$, set $a_t=1+t$ and $C_t=(1-t^2)^{-1/4}$. The diagonal
complex Mehler identity, followed by retaining its $n$th nonnegative term,
gives
\begin{equation}\label{eq:head-mehler}
 \frac{|\He_n(x+iy)|}{\sqrt{n!}}e^{-(x^2-y^2)/2}
 \le C_t t^{-n/2}
       \exp\left(-\frac{x^2}{2a_t}+\frac{y^2}{2(1-t)}\right).
\end{equation}
Indeed, the squared Hermite sum before multiplication by
$e^{-x^2+y^2}$ is
$(1-t^2)^{-1/2}\exp(tx^2/(1+t)+ty^2/(1-t))$.
Define
\[
 A_y=\sum_l|a_l|\cosh(|\nu_l|y),\qquad
 A'_y=\sum_l|a_l\nu_l|\cosh(|\nu_l|y),
\]
\begin{align*}
 K_j(t,y)&=cC_t\sum_{m=0}^{M}\frac{A_y^m}{m!}
        \sqrt{\frac{(j+m-1)!}{j!}}t^{-(j+m-1)/2}
        \quad(j\ge1),\\
 K'_0(t,y)&=cC_t\left\{
    \sum_{m=0}^{M}\frac{A_y^m\sqrt{m!}}{m!}t^{-m/2}
    +A'_y\sum_{m=1}^{M}\frac{A_y^{m-1}\sqrt{(m-1)!}}{(m-1)!}
                 t^{-(m-1)/2}\right\}.
\end{align*}
Equation~\eqref{eq:head-mehler} bounds $|q_j^{[M]}(x+iy)|$ by
$K_j(t,y)e^{-x^2/(2a_t)+y^2/(2(1-t))}$, and bounds
$|(q_0^{[M]})'(x+iy)|$ by the same expression with $K'_0$.

We also use the following strip quadrature rule. If an analytic,
sufficiently decaying $v$ satisfies
$\int_{\R}|v(x\pm iy)|\,dx\le J_\pm$, then
\begin{equation}\label{eq:strip-rule}
 \left|h\sum_{k\in\mathbb Z}v(kh)-\int_{\R}v(x)\,dx\right|
 \le\frac{J_++J_-}{e^{2\pi y/h}-1}.
\end{equation}
Contour shifting bounds the Fourier transform at $2\pi n/h$ by
$J_+e^{-2\pi ny/h}$ or $J_-e^{-2\pi |n|y/h}$ according to its sign;
Poisson summation and two geometric series prove the rule. The integrands
to which we apply it have Gaussian decay on each fixed horizontal strip.

\subsection{Fourier lattice and frequency cutoff}

Write $S=G+R$, where
\[
 \sigma^2=4\ell^2,\qquad G\sim N(0,\sigma^2),\qquad
 |R|\le D:=4\sum_l|b_l|.
\]
The remainder $R$ may depend on $G$.
For the alias estimate choose $0<\beta<1$, and put
\[
 C=\frac{\beta D^2}{2a_t(1-\beta)},\quad
 \kappa=\frac{\beta}{a_t+2\beta\sigma^2},\quad
 c_L=\frac{\kappa L^2}{2},\quad
 \mu=\sqrt{\frac{\beta\sigma^2}{2a_t}}.
\]
If $2e^{-3c_L}<1$, valid $L^2$ allowances for the complete Fourier
lattice are
\begin{equation}\label{eq:head-alias}
 \alpha_j=\frac{2K_j(t,0)e^{C-c_L}}{1-e^{-3c_L}}\quad(j\ge1),
 \qquad
 \alpha_0=\frac{2K'_0(t,0)(D+\sigma+\mu L)e^{C-c_L}}
                    {1-2e^{-3c_L}}.
\end{equation}
For $j\ge1$, the identity
$(|x|-D)_+^2\ge\beta x^2-\beta D^2/(1-\beta)$ and Gaussian
square completion imply
$\|q_j^{[M]}(S+mL)\|_2\le K_j(t,0)e^{C-c_Lm^2}$.
Summing the nonzero translates proves the first bound.

For $j=0$, apply Poisson summation to $(q_0^{[M]})'$, whose integral
equals $2$, and integrate from zero to $S$. Oddness fixes the integration
constant. The error is
\[
 \sum_{m\ne0}\int_0^1 S(q_0^{[M]})'(vS+mL)\,dv.
\]
In the Gaussian square completion at $vG+mL$, the tilted variance of
$G$ is at most $\sigma^2$, and the absolute tilted mean is at most
$\mu|m|L$, uniformly for $0\le v\le1$.
Thus each summand has norm at most
$K'_0(t,0)(D+\sigma+\mu|m|L)e^{C-c_Lm^2}$.
The ratio of successive positive-index majorants is at most
$2e^{-3c_L}$, proving the second bound. The zero Fourier coefficient
of the derivative supplies the linear term $2S/L$ used in the
zero-node correction.

For the omitted frequencies, choose $y>0$ and a new $t\in(0,1)$ if
desired, and set
\[
 C_j(y)=\frac{\sqrt{a_t}}2K_j(t,y)e^{y^2/(2(1-t))},\qquad
 C'_0(y)=\frac{\sqrt{a_t}}2K'_0(t,y)e^{y^2/(2(1-t))}.
\]
Contour shifting in the inverse Fourier formula gives
$|\widetilde V_j(u)|\le C_j(y)e^{-y|u|}$ for $j\ge1$.
Shifting the derivative for $j=0$ gives
$|\widetilde V_0(u)|\le C'_0(y)e^{-y|u|}/|u|$ for $u\ne0$.
With $\Delta=2\pi/L$, the resulting uniform errors are
\begin{equation}\label{eq:head-frequency}
 \begin{split}
 \varrho_j&=\frac{2c\Delta C_j(y)e^{-y(F+1)\Delta}}
                         {1-e^{-y\Delta}}\quad(j\ge1),\\
 \varrho_0&=\frac{2c\Delta C'_0(y)e^{-y(F+1)\Delta}}
                  {(F+1)\Delta(1-e^{-y\Delta})}.
 \end{split}
\end{equation}
Each degree may use its own certified rational pair $(t,y)$.

\subsection{Spatial transforms and propagation}

Consider $h_k(u)=\E[\psi_k(X)e^{iuB(X)}]$, for $|u|\le T=F\Delta$.
Use spacing $h_x$, cutoff $A_x=K_xh_x$, and strip height $y_x>0$.
The translation identity for Hermite polynomials yields
\[
 D_k(y_x):=\E|\psi_k(X+iy_x)|^2
       =\sum_{r=0}^{k}\binom{k}{r}\frac{y_x^{2r}}{r!},\qquad
 B_{\rm im}=|\ell|y_x+\sum_l|b_l|\sinh(|\omega_l|y_x).
\]
Cauchy--Schwarz on the shifted line and~\eqref{eq:strip-rule} bound
the infinite spatial lattice error by $e_{x,k}$ below. Applying
\eqref{eq:head-mehler} on the real line bounds the discarded nodes by
$t_{x,k}$, using the decreasing Gaussian envelope:
\begin{equation}\label{eq:head-spatial}
 \begin{split}
 e_{x,k}&=\frac{2\sqrt{D_k(y_x)}e^{y_x^2/2+TB_{\rm im}}}
                          {e^{2\pi y_x/h_x}-1},\\
 t_{x,k}&=C_t t^{-k/2}\sqrt{a_t}\,
                     \erfc\left(\frac{A_x}{\sqrt{2a_t}}\right).
 \end{split}
\end{equation}
For a binned transform, require the whole interval of each residual
$B(kh_x)-m_kL/M_b$ to lie in $[-\delta_b,\delta_b]$.
Expanding its real phase to degree $J_b$ introduces the further allowance
\begin{equation}\label{eq:head-bins}
 \begin{split}
 g_x&=\frac{2e^{y_x^2/2}}{e^{2\pi y_x/h_x}-1},\qquad
 \mathcal M_k=\sqrt{(1+g_x)(1+g_xD_k(y_x))},\\
 b_{x,k}&=\mathcal M_k\frac{(T\delta_b)^{J_b+1}}{(J_b+1)!}.
 \end{split}
\end{equation}
To obtain $\mathcal M_k$, apply the strip rule to $\phi$ and
$\phi\psi_k^2$, then use Cauchy--Schwarz for the full lattice sum
of $\phi|\psi_k|$. The finite direct sum used here retains this
nonnegative bin allowance in its error budget.

Set $\eta=\sqrt{N+1}(e_{x,N}+t_{x,N}+b_{x,N})$; the positive
expressions in~\eqref{eq:head-spatial}--\eqref{eq:head-bins} increase
with $k$. Bessel's inequality gives $\|(h_k(u))_{k=0}^{N}\|_2\le1$.
The error of its fourfold tensor is therefore bounded by
$\tau_\eta=(1+\eta)^4-1$.
The resulting error for the $j$th conditional coefficient array is at most
\[
 b_j=2c\Delta\tau_\eta\sum_{n=1}^{F}|\widetilde V_j(n\Delta)|.
\]
The evaluation uses the convenient uniform bounds
\[
 b_0\le c\sqrt{a_t}K'_0(t,0)\tau_\eta\sum_{n=1}^{F}\frac1n,
 \qquad
 b_j\le c\Delta F\sqrt{a_t}K_j(t,0)\tau_\eta\quad(j\ge1).
\]
The zero-node values in~\eqref{eq:zero-correction} are evaluated directly
with interval arithmetic. Combining the preceding estimates gives
\begin{equation}\label{eq:head-total-error}
 \delta=\delta_{\rm ph}+
       \left(\sum_{j=0}^{N}(\alpha_j+\varrho_j+b_j)^2\right)^{1/2},
 \qquad E_{\rm head}=\frac\pi2\delta(2+\delta).
\end{equation}
Indeed, if $A$ is the exact Hermite array and $\widetilde A$ its finite
approximation, then $\|A\|_2\le1$, $\|A-\widetilde A\|_2\le\delta$,
and
$\sum|A_{j,\beta}^2-\widetilde A_{j,\beta}^2|
\le\|A-\widetilde A\|_2\|A+\widetilde A\|_2\le\delta(2+\delta)$.
Since $\|P\|_1\le1$, multiplication by
$P(z)^j(-z)^{|\beta|}$ and restriction to degrees at most $N$
preserve this absolute-sum bound after the factor $\pi/2$.

The calculation uses
\[
 \begin{gathered}
 N=403,\quad M=31,\quad L=40,\quad F=768,\quad
 h_x=1/256,\quad A_x=20,\quad y_x=1/8,\\
 t=99/100,\quad\beta=1/3,\quad
 M_b=2048,\quad\delta_b=1/100,\quad J_b=32.
 \end{gathered}
\]
The $404$ rational contour choices and their individual enclosures are
included in the certificate. Outward evaluation of
\eqref{eq:head-total-error} yields
$E_{\rm head}<1.597802295\times10^{-10}$.

\section{Analytic errors in the mixed energies}\label{app:energy-errors}

Each mixed-energy term from the construction of~\cite{Ren2026} has the
form
\[
 I=\E\left[g(S)\prod_{i=1}^{4}w_i(X_i)\right],\qquad
 g(s)=e^{-U(s)^2/a}U(s)^h\prod_{j\ge1}U^{(j)}(s)^{o_j},\qquad
 w_i(x)=\prod_{j\ge1}B^{(j)}(x)^{\lambda_{i,j}},
\]
where $a=1+r$, $h,o_j,\lambda_{i,j}$ are nonnegative integers,
and the products are finite. Put
\[
 \widehat g(t)=\frac1{2\pi}\int_{\R}g(s)e^{-its}\,ds,
 \qquad \varphi_i(t)=\E[w_i(X)e^{itB(X)}].
\]
Fourier inversion and independence give
$I=\int_{\R}\widehat g(t)\prod_i\varphi_i(t)\,dt$.
We approximate this integral by its finite lattice with
\begin{equation}\label{eq:energy-grid}
 \begin{gathered}
 L_0=64,\quad\Delta_0=\pi/32,\quad N_f=3072,\quad T=96\pi,\\
 h_x=1/512,\quad A_x=14,\quad h_s=1/128,\quad Z=16,\quad
 y_x=1/8,\quad y_s=y=2/5,\\
 \delta_b=1/250,\quad J=31,\quad M_0=8192.
 \end{gathered}
\end{equation}
The local transforms use a positive-sign discrete Fourier transform,
the scalar transform uses a negative-sign transform, and the outer sum
has factor $\Delta_0$. Conjugate symmetry counts zero frequency once
and every positive frequency as twice the real part.

\subsection{Local and scalar transforms}

For $v\ge0$ define the local derivative majorant
\[
 W_i(v)=\prod_{j\ge1}\left(
 |\ell|\mathbf1_{j=1}+\sum_l|b_l||\omega_l|^j
                               \cosh(|\omega_l|v)\right)^{\lambda_{i,j}},
 \qquad W_i=W_i(0).
\]
With $B_{\rm im}$ as in Appendix~\ref{app:head-errors}, the error in
$\varphi_i(t)$, uniformly for $|t|\le T$, is at most
\begin{equation}\label{eq:energy-local-error}
 e_i=\frac{2W_i(y_x)e^{y_x^2/2+TB_{\rm im}}}
                     {e^{2\pi y_x/h_x}-1}
       +W_i\erfc(A_x/\sqrt2)
       +W_i(1+g_x)\frac{(T\delta_b)^{J+1}}{(J+1)!},
 \quad
 g_x=\frac{2e^{y_x^2/2}}{e^{2\pi y_x/h_x}-1}.
\end{equation}
The three terms follow respectively from the strip quadrature rule
\eqref{eq:strip-rule}, the decreasing Gaussian tail, and Taylor's theorem
for the real residual phase. The last term uses the full Gaussian
lattice mass bound $1+g_x$.
At each retained node, an integer bin $m$ is chosen with the verified
interval condition $|B(x)-mh_s|\le\delta_b$. Since $M_0h_s=L_0$,
the bin phase at $n\Delta_0$ is exactly a discrete Fourier phase.

For the scalar transform, set
\begin{align*}
 A_v&=\sum_l|a_l|\cosh(|\nu_l|v),&
 B_v&=v+\sum_l|a_l|\sinh(|\nu_l|v),&D_v&=A_v+B_v,\\
 E_v&=\prod_{j\ge1}\left(
 \mathbf1_{j=1}+\sum_l|a_l||\nu_l|^j
                   \cosh(|\nu_l|v)\right)^{o_j}.
\end{align*}
For $s=x+iv$ one has
$|\operatorname{Re}U(s)-x|\le A_v$,
$|\operatorname{Im}U(s)|\le B_v$, and $|U(s)|\le|x|+D_v$.
It follows that $(2\pi)^{-1}\int_{\R}|g(x+iv)|\,dx\le C_v$, where
\begin{equation}\label{eq:energy-strip-constant}
 C_v=\frac{E_ve^{B_v^2/a}}\pi
 \left\{\frac{(A_v+D_v)^{h+1}-D_v^{h+1}}{h+1}
 +\sum_{k=0}^{h}\binom hk(A_v+D_v)^{h-k}
             \frac{a^{(k+1)/2}}2\Gamma\left(\frac{k+1}{2}\right)\right\}.
\end{equation}
To prove this formula, bound the exponential by
$e^{B_v^2/a}e^{-(|x|-A_v)_+^2/a}$, split the positive half-line at
$A_v$, and expand $(x+D_v)^h$ after translating $x-A_v$.
Contour shifting then gives
$|\widehat g(t)|\le C_ve^{-v|t|}$.

For the scalar spatial tail assume $Z>A_0$ and
$Z^2\ge A_0^2+ah/2$, as checked for each term. These conditions make
$e^{-(s-A_0)^2/a}(s+A_0)^h$ decreasing for $s\ge Z$.
Write $R_0=Z-A_0$ and define
\begin{align*}
 J_0(R_0)&=\frac{\sqrt{\pi a}}2\erfc(R_0/\sqrt a),&
 J_1(R_0)&=\frac a2e^{-R_0^2/a},\\
 J_k(R_0)&=\frac a2R_0^{k-1}e^{-R_0^2/a}
                    +\frac{a(k-1)}2J_{k-2}(R_0)\quad(k\ge2).
\end{align*}
Integration by parts identifies $J_k(R_0)$ with
$\int_{R_0}^{\infty}t^ke^{-t^2/a}\,dt$.
Thus the scalar transform error, uniformly for $|t|\le T$, is
\begin{equation}\label{eq:energy-scalar-error}
 e_g=\frac{2C_{y_s}e^{Ty_s}}{e^{2\pi y_s/h_s}-1}
       +\frac{E_0}\pi\sum_{k=0}^{h}\binom hk(2A_0)^{h-k}J_k(R_0).
\end{equation}
The first term is~\eqref{eq:strip-rule} applied to
$(2\pi)^{-1}g(s)e^{-its}$; the second bounds both omitted spatial
tails, including the Fourier normalization.

\subsection{Frequency cutoff and Poisson translates}

The omitted lattice frequencies contribute at most $W\varepsilon_f$,
where $W=\prod_iW_i$ and
\begin{equation}\label{eq:energy-frequency-error}
 \varepsilon_f=
 \frac{2\Delta_0 C_y e^{-y(N_f+1)\Delta_0}}{1-e^{-y\Delta_0}}.
\end{equation}
Poisson summation also introduces the nonzero translates
$\E[g(S+nL_0)\prod_iw_i(X_i)]$.
To bound their total, write $S=G+R$ as above and put
\[
 \sigma^2=4\ell^2,\quad D=4\sum_l|b_l|+A_0,\quad
 c_r=\frac a{a+2\beta\sigma^2},\quad
 \tau^2=\sigma^2c_r,\quad
 \kappa=\frac\beta{a+2\beta\sigma^2},\qquad\beta=\frac12.
\]
For each translate,
$U(S+nL_0)=G+nL_0+\eta$ with $|\eta|\le D$.
The inequality
$(x+\eta)^2\ge\beta x^2-\beta D^2/(1-\beta)$ and square completion
against the Gaussian law of $G$ bound the absolute expectation by
\[
 W E_0\sqrt{c_r}
   e^{\beta D^2/[a(1-\beta)]-\kappa n^2L_0^2}P_h(|n|L_0),
\]
where
\[
 P_h(v)=\sum_{k=0}^{h}\binom hk(D+c_rv)^{h-k}\tau^k\mu_k,
 \qquad \mu_k=\frac{2^{k/2}\Gamma((k+1)/2)}{\sqrt\pi}.
\]
Here $\mu_k$ is the $k$th absolute standard Gaussian moment.
The tilted variable $G+nL_0$ has mean $c_rnL_0$ and variance $\tau^2$.
The polynomial $P_h$ has nonnegative coefficients and degree at most
$h$, so $P_h((n+1)L_0)\le2^hP_h(nL_0)$ for $n\ge1$.
For every term the calculation verifies
$\rho=2^he^{-3\kappa L_0^2}<1$. Summing both signs gives the
translate allowance $W\varepsilon_t$, with
\begin{equation}\label{eq:energy-translate-error}
 \varepsilon_t=
 \frac{2E_0\sqrt{c_r}
    e^{\beta D^2/[a(1-\beta)]-\kappa L_0^2}P_h(L_0)}{1-\rho}.
\end{equation}

\subsection{Combination of the allowances}

At a retained frequency, $|\widehat g|\le C_0$ and
$|\varphi_i|\le W_i$. The complete product error is bounded by
\[
 \mathcal E=(C_0+e_g)\prod_{i=1}^{4}(W_i+e_i)-C_0\prod_{i=1}^{4}W_i.
\]
This positive expression can be evaluated by starting with
$b=C_0$, $e=e_g$ and, for each local factor, replacing
\[
 (b,e)\longmapsto\bigl(bW_i,\ e(W_i+e_i)+be_i\bigr).
\]
The formula retains every cross term. Combining all retained frequencies,
the frequency tail and the Poisson translates proves
\begin{equation}\label{eq:energy-total-error}
 |I-I_{\rm finite}|
 \le\Delta_0(2N_f+1)\mathcal E
                         +W(\varepsilon_f+\varepsilon_t).
\end{equation}
For a mixed energy, multiply this bound by the absolute value of each
exact signed coefficient, sum over the compiled terms, and multiply by
the common factor $2/(\pi\sqrt{1-r^2})$.
Its finite Fourier sum is evaluated with interval arithmetic.
Equations~\eqref{eq:energy-local-error}--\eqref{eq:energy-total-error}
then enclose the complete mixed energy at each of the nine radii
$r=0.81,0.83,\ldots,0.97$ used in the circle estimate.

\section{Exact rational parameters}\label{app:parameters}

The following tables specify all coefficients in
\eqref{eq:boundary-local}--\eqref{eq:preprocessing}.
The ancillary file \texttt{physical.json} stores the same rational data.
Its SHA-256 digest is
\begin{center}\footnotesize\ttfamily
3077b5a8011868ff8a9200590bebb73cb41ad3ba10b09eb37860d5074c3b1653
\end{center}
The auxiliary preprocessing is \(Q(z)=z\); the collective amplitude
in the source model is zero. The local frequencies are \(3k/4\),
\(1\le k\le6\), and the outer frequencies are \(2k\),
\(1\le k\le4\).

\begin{table}[htbp]
\centering
\caption{The coefficients of \(B\) and \(U\).}
\small
\renewcommand{\arraystretch}{1.65}
\begin{tabular}{@{}cr@{}}
\toprule
Parameter & Exact value\\
\midrule
\(\ell\) & \(\frac{17234670337576735171}{20000000000000000000}\)\\
\(b_1\) & \(\frac{-91762684324025364521}{50000000000000000000}\)\\
\(b_2\) & \(\frac{2630271982013917217}{6250000000000000000}\)\\
\(b_3\) & \(\frac{-7544371695907842207}{50000000000000000000}\)\\
\(b_4\) & \(\frac{1412826035912896311}{25000000000000000000}\)\\
\(b_5\) & \(\frac{-1900116405707627717}{100000000000000000000}\)\\
\(b_6\) & \(\frac{106820616638860551}{25000000000000000000}\)\\
\(a_1\) & \(\frac{-683664790512221}{5000000000000000}\)\\
\(a_2\) & \(\frac{210251029030421}{2500000000000000}\)\\
\(a_3\) & \(\frac{-2136926142618817}{50000000000000000}\)\\
\(a_4\) & \(\frac{3815733882548219}{250000000000000000}\)\\
\bottomrule
\end{tabular}
\end{table}

For the preprocessing polynomial, put
\[
 D_P=54945054945054952265606000000000000,
 \qquad p_k=N_k/D_P.
\]
Table~\ref{tab:preprocessing-coefficients} lists the integer numerators.
Every even coefficient and every coefficient of degree greater than
\(81\) is zero. Exact summation gives
\(\sum_k|N_k|/D_P=999999999999/10^{12}\).

{\small
\begin{longtable}{@{}r r@{}}
\caption{Integer numerators of \(P\) in the common denominator \(D_P\).}
\label{tab:preprocessing-coefficients}\\
\toprule
\(k\) & \(N_k\)\\
\midrule
\endfirsthead
\toprule
\(k\) & \(N_k\)\\
\midrule
\endhead
\midrule
\multicolumn{2}{r}{\emph{Continued on the next page}}\\
\endfoot
\bottomrule
\endlastfoot
1 & \(49757548951830099593905263000000000\)\\
3 & \(-5181796689779939082431094000000000\)\\
5 & \(-3586456256823087170116799700\)\\
7 & \(432337220020275080362396875\)\\
9 & \(-77515986037541231266709634\)\\
11 & \(1813839253266558688219259100000\)\\
13 & \(-2111137078429535566218271650000\)\\
15 & \(109421788407171787002801510000\)\\
17 & \(258852989211570707450348610000\)\\
19 & \(-334257009058224671562370012500\)\\
21 & \(-86572484132477861581801501500\)\\
23 & \(186959578861277397563995678500\)\\
25 & \(42848401208951085664725072000\)\\
27 & \(-119641424015011875059501352000\)\\
29 & \(-54637263332040708890514058500\)\\
31 & \(69975395035262508121448184000\)\\
33 & \(72127361578106268242817426000\)\\
35 & \(-14519991251763254734022947500\)\\
37 & \(-63589301067645564545086137000\)\\
39 & \(-31015074144899721189591333000\)\\
41 & \(26944867422365961725984200500\)\\
43 & \(44887219130162882011639023000\)\\
45 & \(15303791586330027527094973350\)\\
47 & \(-22361224278899710204292506500\)\\
49 & \(-31678667932439222430968628000\)\\
51 & \(-10432152218171595320309290500\)\\
53 & \(16903692015101239165127739000\)\\
55 & \(26362279527849819039153442500\)\\
57 & \(13492892529412034579998047000\)\\
59 & \(-7886781300143445086831716800\)\\
61 & \(-20247479541207911861118112500\)\\
63 & \(-15907167543831625799489119500\)\\
65 & \(-373012574341125503909174985\)\\
67 & \(14098934856120450515693316000\)\\
69 & \(17969703421186799527348026000\)\\
71 & \(10073117824180530178879106100\)\\
73 & \(-3174665796161867092445593200\)\\
75 & \(-13041517002967832109370645500\)\\
77 & \(-14082134383563552481001037750\)\\
79 & \(-6747313828606468620237320850\)\\
81 & \(3844072792111654278856236150\)\\
\end{longtable}
}

\bibliographystyle{plain}
\bibliography{references}
\end{document}